\newtheorem{theorem}{Theorem}
\newtheorem{lemma}[theorem]{Lemma}
\newtheorem{corol}[theorem]{Corollary}
\theoremstyle{definition}
\newtheorem{defi}[theorem]{Definition}
\theoremstyle{remark}
\title{On obtaining simple identities for  overshoots of spectrally negative L\'evy processes}
\author{R. L. Loeffen\footnote{School of Mathematics, University of Manchester, Oxford Road, Manchester M13 9PL, United Kingdom, e-mail: ronnie.loeffen@manchester.ac.uk}}
\begin{document}
\maketitle
\begin{abstract}
For a (killed) spectrally negative L\'evy process  we provide an analytic expression for the distribution of its overshoot over a fixed level in terms of the infinitesimal generator and the scale function of the process.   Our identity involves an auxiliary function and the simplicity of the identity depends very much on the choice of this function. In particular, for specific choices  one recovers various previous established formulas in the literature. We review several applications and also show that one can get in  a similar way identities of overshoots for reflected and refracted spectrally negative L\'evy processes.
\end{abstract}

\begin{quote}
\begin{small}
\textbf{AMS 2000 subject classifications.}  60G51. \\
\textbf{Key words and phrases.} Spectrally negative L\'evy processes, Fluctuation theory, Gerber-Shiu function,  Reflected L\'evy processes, Refracted L\'evy processes.
\end{small}
\end{quote}

\section{Introduction}
Let $X=\{X_t:t\geq0\}$ be a spectrally negative L\'evy process on the filtered probability space $(\Omega, \mathcal F,\{\mathcal F_t:t\geq0\},\mathbb P)$, i.e. $X$ is a  process with stationary and independent increments and no positive jumps. Here we assume that $\{\mathcal F_t:t\geq0\}$ is the $\mathbb P$-completion of the natural filtration of $X$, so that it satisfies the usual conditions, see e.g. \cite{protter_2nded}*{Theorem I.31}. We further exclude the case that $X$ is the negative of a subordinator, i.e. we exclude the case of $X$ having decreasing paths. The law of $X$ such that $X_0 = x$ is denoted by $\mathbb P_x$ and the corresponding expectation by $\mathbb E_x$.
We work with the first passage times
 \begin{equation*}
\tau_a^+ = \inf\{t>0:X_t>a\}, \quad \text{and} \quad \tau_a^- = \inf\{t>0:X_t<a\}.
\end{equation*}
In this paper we are interested in the following expectation concerning  the overshoot  over a level $a$, namely
\begin{equation}\label{gerbershiu}
 \mathbb E_x \Big[ \mathrm e^{-q \tau_a^-}  f(X_{\tau_a^-}) \mathbf 1_{\{ \tau_a^-<\tau_b^+ \}}  \Big],
\end{equation}
where $-\infty<a<b<\infty$, $q\geq0$, $x\in[a,b]$ and $f:(-\infty,a)\to\mathbb R$ is a (penalty) function satisfying certain regularity conditions which we specify later. When $b\to\infty$, \eqref{gerbershiu} is a specific case (namely, in which one does not consider the undershoot) of the so-called expected discounted penalty function  introduced by Gerber and Shiu \cite{gerbershiu}, which can be interpreted as a  risk measure for an insurance company. Further,  \eqref{gerbershiu}     appears in many applications of spectrally negative L\'evy processes in which circumstances change when a given level is crossed downwards. Some examples are solving exit problems for  refracted L\'evy processes, see e.g. \cite{kyploeffen}, determining Laplace transforms of occupation times, see e.g. \citelist{\cite{landriault_occup}\cite{loeffenrenaudzhou}\cite{kyp_occup}\cite{renaud_red}}, or computing the value of multi-band strategies, see  \cite{palmgerbershiu}. Analytic expressions for \eqref{gerbershiu} in terms of the scale function and the L\'evy triplet of the spectrally negative L\'evy process are known, but for specific penalty functions   it has been possible, usually after quite some effort, to come up with much simpler expressions; we will review some  examples later on in Section \ref{sec_examples}. 
In this paper we shed some light on why this is possible and show how one can directly get such simple expressions. The novelty of our approach is that we introduce  an extension $\widetilde f$ to the interval $(-\infty,b]$ of the function $f$, which is   free to choose (as long as it satisfies certain regularity conditions) and then provide an expression for \eqref{gerbershiu} which depends on this extension.
  The existing expressions found for \eqref{gerbershiu} turn out to correspond to particular choices of $\widetilde f$. We will further see that  given a penalty function, it is quite obvious how to choose the extension  in such a way that the resulting formula is as simple as possible.

 The rest of the paper is organised as follows. In the  next section we briefly review some background on spectrally negative L\'evy processes and state the main result whose proof is given in Section \ref{sec_proof}. The main idea of the proof also applies to  certain modifications of  spectrally negative L\'evy processes and in Section \ref{sec_reflecrefrac} we give the analogues of \eqref{gerbershiu} for reflected and refracted spectrally negative L\'evy processes. Finally, in Section \ref{sec_examples} we go over some examples to illustrate the usefulness of our result.

\section{Main result}
Before we state the main result, we briefly give some background information on spectrally negative L\'evy processes and their scale functions; proofs can be found in for instance the book of Kyprianou \cite{kypbook}. A spectrally negative L\'evy process is characterised in terms of a so-called  L\'evy triplet $(\gamma,\sigma,\Pi)$,  where $\gamma \in \mathbb R$, $\sigma \geq 0$ is called the Gaussian coefficient and  $\Pi$, called the L\'evy measure, is a  measure on $(0,\infty)$ satisfying
\begin{equation}\label{levymeasure}
\int^{\infty}_0 (1 \wedge \theta^2) \Pi(\mathrm{d}\theta) < \infty.
\end{equation}
Note that for convenience we define the L\'evy measure in such a way that it is a measure on the positive half line instead of the negative half line.
As the L\'{e}vy process $X$ has no positive jumps, its Laplace transform exists and is given by
\begin{equation*}
\mathbb E \left[ \mathrm{e}^{\lambda X_t} \right] = \mathrm{e}^{t \psi(\lambda)} ,
\end{equation*}
for $\lambda,t \geq 0$, where
\begin{equation*}
\psi(\lambda) = \gamma \lambda + \frac{1}{2} \sigma^2 \lambda^2 + \int^{\infty}_0 \left( \mathrm{e}^{-\lambda \theta} - 1 + \lambda \theta \mathbf 1_{\{\theta\leq 1\}} \right) \Pi(\mathrm{d}\theta).
\end{equation*}
The process $X$ has paths of bounded variation if and only if $\sigma=0$ and $\int^{1}_0 \theta \Pi(\mathrm{d}\theta)<\infty$. 

We now recall the definition of the $q$-scale function $W^{(q)}$. For $q \geq 0$, the $q$-scale function of the process $X$ is defined on $[0,\infty)$ as the continuous function with Laplace transform on $[0,\infty)$ given by
\begin{equation}\label{def_scale}
\int_0^{\infty} \mathrm{e}^{- \lambda y} W^{(q)} (y) \mathrm{d}y = \frac{1}{\psi(\lambda) - q} , \quad \text{for $\lambda$ sufficiently large.} 
\end{equation}
This function is unique, positive and strictly increasing for $x\geq0$. We extend $W^{(q)}$ to the whole real line by setting $W^{(q)}(x)=0$ for $x<0$. 
Scale functions appear in various fluctuation identities and  we will need the following one involving exiting the interval $[a,b]$ at the upper boundary,
\begin{equation}\label{twosidedexit}
\mathbb E_x \left[ \mathrm{e}^{-q \tau_b^+ } \mathbf 1_{\{\tau_b^+<\tau_a^-\}} \right] = \frac{W^{(q)}(x-a)}{W^{(q)}(b-a)}, \quad \text{$x\leq b$},
\end{equation}
as well as  the $q$-resolvent measure of $X$ killed upon exiting the interval $[a,b]$,
\begin{equation}\label{resolvent}
\int_{0}^{\infty}\mathrm{e}^{-qs}  \mathbb P_x(X_s\in\mathrm dz,s<\tau_a^-\wedge\tau_b^+)  \mathrm{d}s
= \left[ \frac{W^{(q)}(x-a)}{W^{(q)}(b-a)} W^{(q)}(b- z)  -  W^{(q)}(x-z) \right] \mathrm dz, 
\end{equation}
where $z\in[a,b]$ and $x\leq b$. We refer to Theorem 8.1 and   Theorem 8.7 of \cite{kypbook} for the proof and origin of these two identities. When the Gaussian coefficient $\sigma=0$, the process $X$ cannot creep downwards, i.e. $\mathbb P_x(\tau_a-=a)=0$ for all $x>a$. When $\sigma>0$, $X$ does creep downwards and we have
\begin{equation}\label{creeping}
\mathbb E_x \left[ \mathrm e^{-q \tau_a^-}   \mathbf 1_{\{X_{\tau_a^-}=a, \tau_a^-<\tau_b^+ \}}  \right]
= \frac{\sigma^2}2 \left( W^{(q)\prime}(x-a) - \frac{W^{(q)}(x-a)}{W^{(q)}(b-a)} W^{(q)\prime}(b-a)  \right), \quad \text{$a<x\leq b$}.
\end{equation}
Note that \eqref{creeping} can be deduced from \eqref{resolvent}, see the proof of Corollary 2 in   \cite{pistorius_potential}.

In order to state the main result, we need to introduce the following space of functions.
\begin{defi}\label{def_funcspace}
For $X$ a spectrally negative L\'evy process with L\'evy triplet $(\gamma,\sigma,\Pi)$ and $-\infty<a< b<\infty$, we define   $\mathcal H(X;a,b)$ as the function space consisting of   measurable, locally bounded functions  $h:(-\infty,b]\to\mathbb R$ such that   the following hold:
\begin{itemize}
\item[(i)] $h$ is continuous on $(a,b]$,
\item[(ii)] there exists $\lambda>0$ such that $x\mapsto \int_\lambda^\infty  h(x-\theta)  \Pi(\mathrm d\theta)$ is bounded on $(a,b)$,
\item[(iii)] if $X$ has paths of unbounded variation, then $h$ is continuously differentiable on $(a,b)$ with the derivative being absolutely continuous on $(a,b)$ and having a density that is bounded on  $(a,b)$. 

\item[(iv)] if $X$ has paths of bounded variation, then $h$ is absolutely continuous on $(a,b)$ with a density that is bounded and of bounded variation  on  $(a,b)$.
\end{itemize}
\end{defi}
For   $h\in\mathcal H(X;a,b)$ we define  $\mathcal A h(x)$ by
\begin{equation*}
 \begin{split}
   \mathcal A h(x) = & \gamma h'_-(x) + \frac12 \sigma^2 h''(x) + \int_0^\infty [h(x- \theta)-h(x)+h'_-(x) \theta \mathbf 1_{\{ \theta\leq 1\}} ] \Pi(\mathrm d \theta), \quad x\in(a,b),
 \end{split}
\end{equation*}
where $h'_-$ denotes the left-derivative of $h$ and where if $\sigma=0$, the term $\frac12 \sigma^2 h''(x)$ is understood to equal zero and if $\sigma>0$, $h''$ denotes a version of the density of $h'$ (for all the results derived in this paper, it does not matter which version is chosen).  Note that if  $h\in\mathcal H(X;a,b)$ and $X$ has paths of bounded variation, then $h$ on $(a,b)$ can be written as the difference of two convex functions. This means that when   $h\in\mathcal H(X;a,b)$, the right- and left-derivative $h'_+(x)$ and $h'_-(x)$ are always well-defined for  $x\in(a,b)$. Further, note that the integral term in  $\mathcal A h(x)$ is also well-defined when  $h\in\mathcal H(X;a,b)$, see e.g. the proof of Lemma \ref{lemma_main} below. It is well-known that the operator $\mathcal A$ coincides with the infinitesimal generator of $X$ on the space of twice continuously differentiable functions that vanish at infinity, together with the first two derivatives, cf. \cite{sato}*{Theorem 31.5}.

 We can now state the main theorem.
\begin{theorem}\label{thm_main}
Let $-\infty<a< b<\infty$, $q\geq0$  and $f:(-\infty,a]\to\mathbb R$ be a measurable, locally bounded   function  with the property that there exists $\lambda>b-a$ such that $x\mapsto \int_\lambda^\infty  f(x-\theta)  \Pi(\mathrm d\theta)$ is bounded on $(a,b)$. Let $\widetilde f:(-\infty,b]\to\mathbb R$ be an extension of $f$ that lies in $\mathcal H(X;a,b)$. Then for $a<x\leq b$,
\begin{equation}\label{mainidentity_general}
 \begin{split}
   \mathbb E_x  \Big[ \mathrm e^{-q \tau_a^-}  f(X_{\tau_a^-}) & \mathbf 1_{\{ \tau_a^-<\tau_b^+ \}}  \Big]  
 \\
 = & \widetilde f(x)  - \frac{W^{(q)}(x-a)}{W^{(q)}(b-a)}    \widetilde f(b) \\
& + \int_a^b   (\mathcal A-q) \widetilde f(z) \left[ \frac{W^{(q)}(x-a)}{W^{(q)}(b-a)} W^{(q)}(b- z)  -  W^{(q)}(x-z) \right] \mathrm dz \\
 & +  \left( f(a) - \widetilde f(a+) \right) \frac{\sigma^2}2 \left( W^{(q)\prime}(x-a) - \frac{W^{(q)}(x-a)}{W^{(q)}(b-a)} W^{(q)\prime}(b-a)  \right),
 \end{split}
\end{equation}
where $\widetilde f(a+):=\lim_{y\downarrow a}\widetilde f(y)$. Further, for the case $X_0=a$, if  $X$ has paths of bounded variation, then
\begin{equation*}
  \mathbb E_a \Big[ \mathrm e^{-q \tau_a^-}  f(X_{\tau_a^-}) \mathbf 1_{\{ \tau_a^-<\tau_b^+ \}}  \Big]  
=   \widetilde f(a+)  -       \frac{W^{(q)}(0)}{W^{(q)}(b-a)} \left[ \widetilde f(b)  - \int_a^b   (\mathcal A-q) \widetilde f(z)   W^{(q)}(b- z)  \mathrm dz \right],
\end{equation*}
whereas if $X$ has paths of unbounded variation, $\mathbb E_a \left[ \mathrm e^{-q \tau_a^-}  f(X_{\tau_a^-}) \mathbf 1_{\{ \tau_a^-<\tau_b^+ \}}  \right]=f(a)$.
\end{theorem}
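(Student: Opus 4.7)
The plan is to apply an It\^o--Dynkin change-of-variables formula to $e^{-qt}\bar f(X_t)$ stopped at $T := \tau_a^-\wedge\tau_b^+$, where $\bar f$ is the right-continuous modification of $\widetilde f$ at $a$: $\bar f(a) := \widetilde f(a+)$ and $\bar f\equiv\widetilde f$ elsewhere. On $[0,T)$ the process stays in $(a,b)$, where $\bar f$ coincides with $\widetilde f$ and has exactly the regularity imposed by $\mathcal H(X;a,b)$. The technical core is Lemma~\ref{lemma_main} (to be established separately), which I expect to state as: the process
\[
M_t := e^{-q(t\wedge T)}\bar f(X_{t\wedge T}) - \widetilde f(x) - \int_0^{t\wedge T} e^{-qs}(\mathcal A - q)\widetilde f(X_{s-})\, \mathrm ds
\]
is a $\mathbb P_x$-martingale. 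Conditions (iii)--(iv) in Definition~\ref{def_funcspace} are calibrated to make the appropriate change-of-variables formula applicable --- a classical It\^o formula with an absolutely continuous derivative in the unbounded variation case, a Stieltjes-type integration-by-parts in the bounded variation case --- while condition (ii) supplies enough integrability on the jump tail to upgrade the compensated Poisson random measure term from a local to a true martingale.

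Once Lemma~\ref{lemma_main} is in hand, taking expectations and passing $t\to\infty$ by dominated convergence gives
\[
\mathbb E_x\!\left[e^{-qT}\bar f(X_T)\right] = \widetilde f(x) + \mathbb E_x\!\left[\int_0^T e^{-qs}(\mathcal A-q)\widetilde f(X_s)\,\mathrm ds\right].
\]
I would compute the right-hand integral by Fubini using the resolvent density \eqref{resolvent}, which reproduces the bracketed kernel in \eqref{mainidentity_general}. The left-hand side I would split by exit event: on $\{X_T=b\}$ one has $\bar f(X_T)=\widetilde f(b)$, with \eqref{twosidedexit} supplying the upward Laplace transform; on the creeping event $\{X_T=a\}$, which can occur only when $\sigma>0$, $\bar f(X_T)=\widetilde f(a+)$, with \eqref{creeping} supplying the creeping Laplace transform; on $\{X_T<a\}$, $\bar f(X_T)=f(X_T)$. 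Since the target expectation splits as $f(a)\,\mathbb E_x[e^{-qT};X_T=a]+\mathbb E_x[e^{-qT}f(X_T);X_T<a]$, comparing with the It\^o identity leaves a residual $(f(a)-\widetilde f(a+))\,\mathbb E_x[e^{-qT};X_T=a]$, which by \eqref{creeping} yields exactly the final line of \eqref{mainidentity_general}.

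For the boundary case $x=a$: in the unbounded variation regime $0$ is regular for $(-\infty,0)$ for $X$, so under $\mathbb P_a$ one has $\tau_a^-=0$ and $X_{\tau_a^-}=a$ almost surely, trivially giving $f(a)$. In the bounded variation regime (so automatically $\sigma=0$) $0$ is irregular for $(-\infty,0)$ and $\tau_a^->0$ almost surely, and I would obtain the claim by letting $x\downarrow a$ in \eqref{mainidentity_general}: $\widetilde f(x)\to\widetilde f(a+)$ by continuity of $\widetilde f$ on $(a,b]$, $W^{(q)}(x-a)\to W^{(q)}(0)$ which is strictly positive here, and $W^{(q)}(x-z)\to 0$ for $z>a$ since $W^{(q)}$ vanishes on the negative axis; the creeping term disappears because $\sigma=0$, and the left-hand side is continuous down to $x=a$ by quasi-left-continuity of $X$ at $\tau_a^-$, justifying the passage to the limit inside the integral.

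The main obstacle is Step~1, establishing the It\^o--Dynkin identity under only the regularity of $\mathcal H(X;a,b)$: $\widetilde f$ is merely $C^1$ with an absolutely continuous derivative in the unbounded variation case and merely absolutely continuous with a bounded-variation density in the bounded variation case, so the textbook $C^2$ It\^o formula does not apply directly. One must combine a careful Stieltjes- or It\^o-type change of variables on the continuous semimartingale part with explicit handling of the sum over jumps via compensation of the Poisson random measure, using condition (ii) of Definition~\ref{def_funcspace} to dominate $\int_\lambda^\infty \widetilde f(X_{s-}-\theta)\,\Pi(\mathrm d\theta)$ and ensure that the resulting compensated jump integral is a genuine martingale on the stopped interval $[0,T]$.
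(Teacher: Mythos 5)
Your main line of argument is the same as the paper's: replace $\widetilde f$ by its right-continuous modification $g$ at $a$, apply the Meyer--It\^o change-of-variables formula to $\mathrm e^{-q(t\wedge T)}g(X_{t\wedge T})$ with $T=\tau_a^-\wedge\tau_b^+$ (conditions (iii)--(iv) of Definition \ref{def_funcspace} are indeed exactly what makes Protter's Theorem 70 and its corollaries applicable), identify the compensated jump part and the Brownian/small-jump part as true martingales via the compensation formula and the L\'evy--It\^o decomposition, take expectations and let $t\to\infty$, evaluate $\mathbb E_x\int_0^T$ via the resolvent \eqref{resolvent}, and split $\mathbb E_x[\mathrm e^{-qT}g(X_T)]$ over the three exit events using \eqref{twosidedexit} and \eqref{creeping}; the bookkeeping of the residual $(f(a)-\widetilde f(a+))$ creeping term is exactly right. (One cosmetic remark: what you call Lemma \ref{lemma_main} is, in the paper, an integrability statement $\int_a^b|\mathcal A h|<\infty$ used only for Corollary \ref{corol_main}; the martingale identity is proved inline in the proof of the theorem.)

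The one genuine gap is your treatment of the boundary case $X_0=a$ with paths of bounded variation. You propose to let $x\downarrow a$ in \eqref{mainidentity_general}, but the left-hand side need not converge: $f$ is only assumed measurable and locally bounded, so $f(X_{\tau_a^-})$ under $\mathbb P_x$ has no reason to converge to $f(X_{\tau_a^-})$ under $\mathbb P_a$ as $x\downarrow a$ (by spatial homogeneity the overshoot position shifts by $x-a$, and $f$ may oscillate). Quasi-left-continuity is not the relevant property here in any case --- the issue is continuity in the starting point, not in time. Moreover, passing to the limit inside $\int_a^b(\mathcal A-q)\widetilde f(z)\bigl[\cdots\bigr]\mathrm dz$ requires an integrable dominating function, and $\int_a^b|(\mathcal A-q)\widetilde f(z)|\,\mathrm dz<\infty$ is \emph{not} guaranteed under the hypotheses of the theorem (that is precisely the content of the paper's Lemma \ref{lemma_main} and the extra assumptions of Corollary \ref{corol_main}). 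The fix is simple and is what the paper does: run the It\^o/martingale argument directly with $X_0=a$. Nothing in the derivation requires $x>a$; one then observes that for bounded variation paths the event $\{X_{\tau_a^-}=a\}$ has $\mathbb P_a$-probability zero, $g(a)=\widetilde f(a+)$, and $W^{(q)}(a-z)=0$ for $z>a$, which yields the stated formula at $x=a$ without any limiting procedure. Your treatment of the unbounded variation case at $x=a$ (regularity of $(-\infty,0)$, hence $\tau_a^-=0$ and $X_{\tau_a^-}=a$ a.s.) is correct.
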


\begin{corol}\label{corol_main}
Suppose   the conditions in Theorem \ref{thm_main} hold and assume in addition that one of the following holds:
\begin{itemize}
\item[(i)] $\int_0^1 \theta\Pi(\mathrm d\theta)<\infty$ and $\sigma=0$,
\item[(ii)] $\int_0^1 \theta\Pi(\mathrm d\theta)<\infty$ and $\widetilde f$ is right-continuous at $a$,
\item[(iii)] $\int_0^1 \theta\Pi(\mathrm d\theta)=\infty$  and $\widetilde f$ has a bounded density in a neighbourhood of $a$.
\end{itemize}  
Then for $a<x\leq b$,
\begin{equation}\label{mainidentity_simple}
 \begin{split}
   \mathbb E_x \Big[ \mathrm e^{-q \tau_a^-}  f(X_{\tau_a^-}) \mathbf 1_{\{ \tau_a^-<\tau_b^+ \}}  \Big]  
= & \widetilde f(x)  -   \int_a^x   (\mathcal A-q) \widetilde f(z)   W^{(q)}(x- z) \mathrm dz \\
& -  \frac{W^{(q)}(x-a)}{W^{(q)}(b-a)} \left[ \widetilde f(b)  - \int_a^b   (\mathcal A-q) \widetilde f(z)   W^{(q)}(b- z)  \mathrm dz \right].
 \end{split}
\end{equation}
\end{corol}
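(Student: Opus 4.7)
The plan is to deduce Corollary~\ref{corol_main} directly from Theorem~\ref{thm_main} by performing two operations on \eqref{mainidentity_general}: reshaping the scale-function integral into the split form appearing in \eqref{mainidentity_simple}, and showing that the creeping correction on the last line of \eqref{mainidentity_general} vanishes under each of the three additional hypotheses.

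For the rearrangement I would only need the fact that $W^{(q)}$ has been extended by zero on the negative half-line, so $W^{(q)}(x-z)=0$ whenever $z>x$. Consequently the contribution of $-W^{(q)}(x-z)$ in the bracketed factor of the integral in \eqref{mainidentity_general} survives only on $(a,x)$, while the contribution of $W^{(q)}(b-z)$ factors the common prefactor $W^{(q)}(x-a)/W^{(q)}(b-a)$ out of the integral. Combining these two pieces with the explicit $\widetilde f(x)$ and $-\frac{W^{(q)}(x-a)}{W^{(q)}(b-a)}\widetilde f(b)$ already present reorganises \eqref{mainidentity_general} into the shape of \eqref{mainidentity_simple} plus the creeping correction
\[
\bigl(f(a)-\widetilde f(a+)\bigr)\frac{\sigma^{2}}{2}\Bigl(W^{(q)\prime}(x-a)-\tfrac{W^{(q)}(x-a)}{W^{(q)}(b-a)}W^{(q)\prime}(b-a)\Bigr).
\]

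The remaining task is to verify that this correction vanishes under each of (i), (ii), (iii). Under (i), $\sigma=0$ eliminates the whole term outright. Under (ii), the right-continuity of $\widetilde f$ at $a$ combined with $\widetilde f(a)=f(a)$ gives $\widetilde f(a+)=f(a)$, so the scalar factor $f(a)-\widetilde f(a+)$ is zero. Under (iii), I would read ``bounded density in a neighbourhood of $a$'' as saying that $\widetilde f$ is absolutely continuous with a bounded derivative on some two-sided interval $(a-\varepsilon,a+\varepsilon)\cap(-\infty,b]$; this makes $\widetilde f$ Lipschitz across $a$ and hence continuous at $a$, which once more forces $\widetilde f(a+)=f(a)$. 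The only point requiring any care is precisely this interpretation of (iii) as delivering right-continuity at $a$; no substantive obstacle is expected elsewhere, and the rest of the argument is a direct rewriting of Theorem~\ref{thm_main}.
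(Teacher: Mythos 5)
There is a genuine gap. Your argument for killing the creeping term under each of (i)--(iii) is fine and matches the paper's (in particular your reading of (iii) as forcing continuity of $\widetilde f$ at $a$ is the intended one). The problem is the "rearrangement" step, which you treat as a purely formal consequence of $W^{(q)}$ vanishing on the negative half-line. Writing
\begin{equation*}
\int_a^b (\mathcal A-q)\widetilde f(z)\left[ \tfrac{W^{(q)}(x-a)}{W^{(q)}(b-a)}W^{(q)}(b-z)-W^{(q)}(x-z)\right]\mathrm dz
\end{equation*}
as a difference of the two integrals appearing in \eqref{mainidentity_simple} requires each piece to be separately finite, i.e.\ it requires $\int_a^b |(\mathcal A-q)\widetilde f(z)|\,\mathrm dz<\infty$. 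This is not automatic: the bracketed resolvent density vanishes as $z\downarrow a$, so the single integral in \eqref{mainidentity_general} can converge through cancellation even when $(\mathcal A-q)\widetilde f$ blows up near $a$; after splitting, the term weighted by $W^{(q)}(b-z)$ (which does not vanish at $z=a$) could diverge. This integrability is exactly the content of Lemma \ref{lemma_main}, whose hypotheses ($\int_0^1\theta\Pi(\mathrm d\theta)<\infty$, or $=\infty$ together with a bounded density near $a$) are precisely the extra hypotheses (i)--(iii) of the corollary. The tell-tale sign of the gap is that your proof never actually uses the conditions on $\int_0^1\theta\Pi(\mathrm d\theta)$: if the splitting were free, the corollary would hold whenever $\sigma=0$ or $\widetilde f(a+)=f(a)$, which is a strictly weaker hypothesis than what is stated. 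To repair the proof, invoke Lemma \ref{lemma_main} (noting $W^{(q)}$ is bounded on $[0,b-a]$) to justify the split before performing your rearrangement.
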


The identities \eqref{mainidentity_general} and \eqref{mainidentity_simple} depend on the chosen extension $\widetilde f$. Note that one always has the option to choose $\widetilde f$ to be right-continuous at $a$, with the result that the creeping term in \eqref{mainidentity_general} vanishes. Though  \eqref{mainidentity_simple} is more convenient to work with, it is not always possible to use it: in particular when  $\int_0^1 \theta\Pi(\mathrm d\theta)=\infty$ and $f$ is not left-continuous at $a$, then the extra condition in Corollary \ref{corol_main} does not hold no matter which extension is chosen and so one then has to resort to \eqref{mainidentity_general}.
When we choose $\widetilde f(y)=0$ for all $y\in(a,b]$, then \eqref{mainidentity_general}  becomes,
\begin{equation}\label{extis0}
 \begin{split}
   \mathbb E_x  \Big[ \mathrm e^{-q \tau_a^-}  f(X_{\tau_a^-}) & \mathbf 1_{\{ \tau_a^-<\tau_b^+ \}}  \Big]  
 \\
 = &   \int_a^b   \int_{z-a}^\infty   f(z-\theta) \Pi(\mathrm d\theta) \left[ \frac{W^{(q)}(x-a)}{W^{(q)}(b-a)} W^{(q)}(b- z)  -  W^{(q)}(x-z) \right] \mathrm dz \\
 & +   f(a)  \frac{\sigma^2}2 \left( W^{(q)\prime}(x-a) - \frac{W^{(q)}(x-a)}{W^{(q)}(b-a)} W^{(q)\prime}(b-a)  \right).
 \end{split}
\end{equation}
This identity is often  used in the literature to deal with overshoots, see e.g.  Equation (10.28) in \cite{kyploeffen}, (5) in \cite{loeffenrenaudzhou}, (2.6) in \cite{kyp_occup} or  in the case where $b\to\infty$, (4) in \cite{landriault_occup}. It can be proved by using the compensation formula and has the advantage that one can easily incorporate the undershoot $\lim_{s\uparrow\tau_a^-}X_{t}-a$ as well,  see e.g. \cite{kypbook}*{Equation (8.32)}.  However, when one is only interested in the overshoot, we recommend to use \eqref{mainidentity_general} or \eqref{mainidentity_simple} instead, since by choosing the extension $\widetilde f$ wisely, one can, in specific cases, get significantly simpler identities, which are not obvious to spot by using \eqref{extis0}, see Section \ref{sec_examples} for examples. Avram et al. \cite{palmgerbershiu} work with a different identity than \eqref{extis0}, see Definition 5.2 and Proposition 5.5 in \cite{palmgerbershiu}. Their identity corresponds to the extension $\widetilde f(y)=f'_-(a)y + f(a)$, $y\in(a,b]$, assuming that the penalty function $f$ admits a left-derivative at $a$. As  can be seen in \cite{palmgerbershiu}, this choice is very convenient for computing the value function of multi-band strategies as in that case one deals with penalty functions which are affine in a neighbourhood of $a$.

\section{Proof}\label{sec_proof}

\begin{lemma}\label{lemma_main}
Let $-\infty<a< b<\infty$ and $h\in\mathcal H(X;a,b)$. Assume further that (i) $\int_0^1 \theta\Pi(\mathrm d\theta)=\infty$  and $h$ has a bounded density in a neighbourhood of $a$  or  (ii) $\int_0^1 \theta\Pi(\mathrm d\theta)<\infty$. Then $\int_a^b | \mathcal A h(x) |\mathrm dx<\infty$.  
\end{lemma}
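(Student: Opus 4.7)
The plan is to decompose
\[
\mathcal A h(x) = \gamma h'_-(x) + \tfrac12 \sigma^2 h''(x) + I_1(x) + I_2(x),
\]
where $I_2(x)=\int_1^\infty[h(x-\theta)-h(x)]\,\Pi(\mathrm d\theta)$ and $I_1(x)=\int_0^1[h(x-\theta)-h(x)+h'_-(x)\theta]\,\Pi(\mathrm d\theta)$, and to verify that each summand is in $L^1(a,b)$. From Definition \ref{def_funcspace}, $h'_-$ is bounded on $(a,b)$ in both variation regimes (because it admits a bounded density there), and $h''$ is bounded on $(a,b)$ whenever $\sigma>0$; in particular $\gamma h'_-$ and $\tfrac12\sigma^2 h''$ are integrable on $(a,b)$. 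I will also use that $h$ is Lipschitz on $(a,b)$, hence bounded there, and locally bounded on $(-\infty,b]$.

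For $I_2$, property (ii) of $\mathcal H$ provides $\lambda>0$ with $x\mapsto\int_\lambda^\infty h(x-\theta)\,\Pi(\mathrm d\theta)$ bounded on $(a,b)$; the gap $[\min(1,\lambda),\max(1,\lambda)]$ is dealt with by $\Pi$-finiteness together with local boundedness of $h$, and $h(x)\Pi(1,\infty)$ is bounded since $\Pi(1,\infty)<\infty$. Hence $I_2\in L^\infty(a,b)$. I then fix a small $\eta\in(0,1)$ and split $I_1=\int_0^\eta+\int_\eta^1$; the second piece is bounded on $(a,b)$ by the same finite-measure argument, so the task reduces to controlling $I_{1a}(x):=\int_0^\eta[h(x-\theta)-h(x)+h'_-(x)\theta]\,\Pi(\mathrm d\theta)$.

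By Tonelli, $\int_a^b|I_{1a}(x)|\,\mathrm dx\le\int_0^\eta F(\theta)\,\Pi(\mathrm d\theta)$ with $F(\theta):=\int_a^b|h(x-\theta)-h(x)+h'_-(x)\theta|\,\mathrm dx$. In case (ii), Lipschitzness of $h$ on $(a,b)$ combined with local boundedness on $[a-\eta,b]$ yields $F(\theta)=O(\theta)$ (split the $x$-integral at $a+\theta$), and $\int_0^\eta\theta\,\Pi(\mathrm d\theta)<\infty$ finishes. In case (i), $h$ is $C^1$ on $(a,b)$ with bounded $h''$, so for $x\in(a+\theta,b)$ the identity
\[
h(x-\theta)-h(x)+h'(x)\theta=\int_0^\theta\int_{x-s}^x h''(u)\,\mathrm du\,\mathrm ds
\]
gives the bound $\tfrac12\|h''\|_\infty\theta^2$. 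For the residual strip $x\in(a,a+\theta)$ (of length $\theta$), I choose $\eta$ small enough that $[a-\eta,a+\eta]$ lies inside the neighbourhood of $a$ on which $h$ has a bounded density, whence $|h(x-\theta)-h(x)|=O(\theta)$ and that strip contributes $O(\theta^2)$. Hence $F(\theta)=O(\theta^2)$, and $\int_0^\eta\theta^2\,\Pi(\mathrm d\theta)<\infty$, guaranteed by $\int(1\wedge\theta^2)\,\Pi(\mathrm d\theta)<\infty$, closes case (i).

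The main obstacle is exactly the residual strip in case (i): outside it the $\|h''\|_\infty$ Taylor estimate is routine, but on $\{x-\theta\le a<x\}$ the function $h$ has crossed into a region where no $C^1$-regularity is assumed by $\mathcal H$. Local boundedness alone would only give $|h(x-\theta)-h(x)|=O(1)$, contributing $O(\theta)$ to $F(\theta)$, which is insufficient when paired with $\int_0^1\theta^2\,\Pi(\mathrm d\theta)$; the extra hypothesis in case (i) that $h$ has a bounded density in a neighbourhood of $a$ is precisely what upgrades this displacement to $O(\theta)$, producing the desired $O(\theta^2)$.
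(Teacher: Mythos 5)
Your proof is correct and follows essentially the same route as the paper: boundedness of $h$, $h'_-$ and $h''$ on $(a,b)$ for the local terms, Taylor's theorem giving an $O(\theta^2)$ (case (i)) or $O(\theta)$ (case (ii)) bound for small jumps away from $a$, the bounded density of $h$ near $a$ to control the boundary strip $\{x-\theta\le a<x\}$, Tonelli to integrate against $\theta^2\wedge 1$ or $\theta\wedge 1$, and Definition \ref{def_funcspace}(ii) plus local boundedness for the large jumps. The only differences are organizational (you split $\theta$ at a fixed $\eta$ and the $x$-integral at $a+\theta$, and in case (ii) you keep the compensator inside the small-jump integral rather than absorbing it into the drift), and your closing remark correctly identifies the boundary strip as the step where the extra hypothesis of case (i) is actually needed.
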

\begin{proof}[\textbf{Proof}] 
We first consider the case where condition (i) holds. By Definition \ref{def_funcspace}(iii), $h'$ has a bounded density on $(a,b)$ and we denote by $h''$ a version of this density.  
Then by the boundedness of $h''$ on $(a,b)$, $h$ and $h'$ are  bounded on $(a,b)$. Hence   $\int_a^b |\gamma h'(x) + \frac12 \sigma^2 h''(x) |\mathrm dx<\infty$.  To deal with the integral term, first note that by Taylor's theorem   we have for all $x\in(a,b)$,
\begin{equation*}
\begin{split}
  \int_0^{x-a} |h(x- \theta)-h(x)+h'(x)\mathbf 1_{\{ \theta\leq 1\}} | \Pi(\mathrm d \theta)   \leq & \sup_{t\in(a,x)} |h''(t)| \int_0^{x-a} \frac12  \theta^2\Pi(\mathrm d \theta) \\
  \leq & \sup_{t\in(a,b)} |h''(t)| \int_0^{b-a} \frac12  \theta^2\Pi(\mathrm d \theta).
\end{split}
\end{equation*}
Second, by Fubini,
\begin{equation*}
 \begin{split}
\int_a^b     \int_{x-a}^\infty |h'(x)  \theta\mathbf 1_{\{\theta\leq 1\}} |\Pi(\mathrm d\theta) \mathrm d x \leq &  \sup_{t\in(a,b)} |h'(t)| \int_a^b  \int_{x-a}^\infty \theta\mathbf 1_{\{\theta\leq 1\}}  \Pi(\mathrm d\theta) \mathrm d x \\
\leq &  \sup_{t\in(a,b)} |h'(t)|   \int_{0}^1 \theta^2 \Pi(\mathrm d\theta).
 \end{split}
\end{equation*}
Third, we have for any $\delta\in(0,b-a)$, 
\begin{equation*}
\begin{split}
\int_{a+\delta}^b & \int_{x-a}^\infty  |h(x-\theta)-h(x) |  \Pi(\mathrm d\theta)\mathrm dx \\
\leq &
\int_{a+\delta}^b |h(x) | \Pi(x-a,\infty) \mathrm dx + \int_{a+\delta}^b \int_{x-a}^\infty |h(x-\theta) | \Pi(\mathrm d\theta)\mathrm dx \\
\leq & \Pi(\delta,\infty)  \int_{a+\delta}^b |h(x) |  \mathrm dx  + \int_{a+\delta}^b \int_{x-a}^{b-a} |h(x-\theta) | \Pi(\mathrm d\theta)\mathrm dx   + \int_{a+\delta}^b \int_{b-a}^\infty |h(x-\theta) | \Pi(\mathrm d\theta)\mathrm dx \\
 < & \infty,
\end{split}
\end{equation*}   
where the last inequality is due to Definition \ref{def_funcspace}(ii), inequality \eqref{levymeasure} and because by using Fubini,
\begin{equation*}
\begin{split}
\int_{a+\delta}^b \int_{x-a}^{b-a} |h(x-\theta) | \Pi(\mathrm d\theta)\mathrm dx  \leq &
\sup_{t\in(2a+\delta-b,a) }|h(t)| \int_{a+\delta}^b \int_{x-a}^{b-a} \Pi(\mathrm d\theta)\mathrm dx \\
= & \sup_{t\in(2a+\delta-b,a) }|h(t)| \int_{\delta}^{b-a}   (\theta-\delta)\Pi(\mathrm d\theta).
\end{split}
\end{equation*}   
Further,
\begin{multline*}
\int_a^{a+\delta}  \int_{\delta}^\infty |h(x-\theta)-h(x) | \Pi(\mathrm d\theta)\mathrm dx \\ \leq  
\Pi(\delta,\infty) \int_a^{a+\delta} |h(x) |  \mathrm dx + \int_a^{a+\delta}  \int_{\delta}^\infty |h(x-\theta) | \Pi(\mathrm d\theta)\mathrm dx
\end{multline*}
and the right hand side is finite by Definition \ref{def_funcspace}(ii).
Lastly, choosing $\delta>0$  small enough such that $h$ has a bounded density in $(a-\delta,b)$, which we denote, with abuse of notation, by $h'$,  we get by Taylor's theorem and Fubini,
\begin{equation*}
\begin{split}
\int_a^{a+\delta} \int_{x-a}^{\delta} |h(x-\theta)-h(x) | \Pi(\mathrm d\theta)\mathrm dx 
\leq &   \int_a^{a+\delta}  \sup_{t\in(x-\delta,x)} |h'(t)| \int_{x-a}^{\delta} \theta  \Pi(\mathrm d\theta)\mathrm dx \\
\leq & \sup_{t\in(a-\delta,b)} |h'(t)| \int_a^{a+\delta}  \int_{x-a}^{\delta} \theta  \Pi(\mathrm  d\theta ) \mathrm dx \\
 = & \sup_{t\in(a-\delta,b)} |h'(t)| \int_0^{\delta}  \theta^2 \Pi(\mathrm  d\theta ).  
\end{split}
\end{equation*}
Combining everything and recalling \eqref{levymeasure}, gives us  $\int_a^b |\mathcal A h(x)| \mathrm dx<\infty$.

\medskip

We now assume that condition (ii) holds, i.e. $\int_0^1 \theta\Pi(\mathrm d\theta)<\infty$. . Then by boundedness of $h'_-$ and $h''$ (if $\sigma>0$) on $(a,b)$, we have  $\int_a^b |(\gamma+ \int_0^1 \theta \Pi(\mathrm d\theta)) h'_-(x) + \frac12 \sigma^2 h''(x) |\mathrm dx<\infty$.
  Further  by Taylor's theorem, we have for all $x\in(a,b)$,
\begin{equation*}
  \int_0^{x-a} |h(x-\theta)-h(x) | \Pi(\mathrm d\theta)    \leq   \sup_{t\in(a,x)} |h'_-(t)|  \int_0^{x-a} \theta\Pi(\mathrm d\theta)   \leq \sup_{t\in(a,b)} |h'_-(t)| \int_0^{b-a} \theta\Pi(\mathrm d\theta).
\end{equation*}
Moreover,
\begin{equation*}
 \begin{split}
   \int_a^b \int_{x-a}^\infty |h(x-\theta)-h(x)| & \Pi(\mathrm d\theta)\mathrm dx  \\  
   \leq & \sup_{t\in(a,b)} |h(t)|\int_a^b \int_{x-a}^\infty  \Pi(\mathrm d\theta)\mathrm dx  + \int_a^b \int_{x-a}^{b-a} |h(x-\theta)| \Pi(\mathrm d\theta)\mathrm dx   \\
      &+ \int_a^b \int_{b-a}^\infty |h(x-\theta)| \Pi(\mathrm d\theta)\mathrm dx \\
= & \sup_{t\in(a,b)} |h(t)|\int_0^\infty \theta\wedge(b-a) \Pi(\mathrm d\theta)   + \sup_{t\in(2a-b,a) }|h(t)| \int_{0}^{b-a}   \theta\Pi(\mathrm d\theta) \\ 
&  + \int_a^b \int_{b-a}^\infty |h(x-\theta)| \Pi(\mathrm d\theta)\mathrm dx
 \end{split}
\end{equation*}
and the right hand side is finite by Definition \ref{def_funcspace}(ii). We conclude that also in this case $\int_a^b |\mathcal A h(x)| \mathrm dx<\infty$.
\end{proof}

\begin{proof}[\textbf{Proof of Theorem \ref{thm_main}}]
Let $x\in[a,b]$.
In order to deal with the possibility that  $\widetilde f$ is not right-continuous at $a$, we introduce the 
function $g:(-\infty,b]\to\mathbb R$ defined by
\begin{equation*}
 g(x) = 
\begin{cases}
\widetilde f(x) & \text{$x\neq a$}, \\
\widetilde f(a+) & \text{$x=a$}. 
\end{cases}
\end{equation*}
For notational convenience, let $T=\tau_a^+\wedge\tau_b^-$.
Then by the regularity assumptions, $g$ is smooth enough on $[a,b]$ in order to use the Meyer-It\^o formula. In particular, using Theorem 70 of \cite{protter_2nded}  in combination with the fact that the continuous part of the quadratic variation of the L\'evy process is given by $[X,X]^c_t=\sigma^2 t$ (cf. \cite{jacodshiryaev}*{Theorem I.4.52, Definition II.2.6 and Corollary II.4.19}) and  (i) Corollary 1 on p.216 of \cite{protter_2nded} in the case where $X$ has paths of unbounded variation or (ii)   Corollary 3 on p.225 of  \cite{protter_2nded} in the case where $X$ has paths of bounded variation,
we get under $\mathbb P_x$,
\begin{equation*}
\begin{split}
\mathrm{e}^{-q(t\wedge T)} g(X_{t\wedge T})  
= & g(X_0)  +  \int_{0+}^{t\wedge T} \mathrm{e}^{-qs} \mathrm d g(X_s) -   \int_{0+}^{t\wedge T} q\mathrm{e}^{-qs} g(X_{s-})  \mathrm{d}s \\
= & g(X_0)  + \int_{0+}^{t\wedge T}\mathrm{e}^{-qs} \left( \frac{\sigma^2}{2} g''(X_{s-})-q g (X_{s-}) \right) \mathrm{d}s    +\int_{0+}^{t\wedge T}\mathrm{e}^{-qs} g'_-(X_{s-}) \mathrm{d}  X_s  \\
 & + \sum_{0<s\leq t\wedge T}\mathrm{e}^{-qs}[\Delta g(X_s)- g'_-(X_{s-})  \Delta X_s  ] \\
= & g(X_0)  + \int_{0+}^{t\wedge T}\mathrm{e}^{-qs}   (\mathcal A-q) g(X_{s-})   \mathrm{d}s \\
&  + \Bigg\{ \int_{0+}^{t\wedge T}\mathrm{e}^{-qs}  g_-'(X_{s-})\mathrm{d} \left( X_s-\gamma s-\sum_{0<u\leq s}\Delta X_u\mathbf{1}_{\{ |\Delta X_u| >1\}} \right)   \Bigg\} \\
&  + \Bigg\{ \sum_{0<s\leq t\wedge T}\mathrm{e}^{-qs} \left( \Delta  g(X_{s-}+\Delta X_s)-  g_-'(X_{s-})\Delta X_s\mathbf{1}_{\{ |\Delta X_s| \leq1\}} \right) \\
& -   \int_{0+}^{t\wedge T} \int_{0+}^{\infty} \mathrm{e}^{-qs} \left(  g (X_{s-}-\theta) - g (X_{s-}) +  g'_-(X_{s-}) \theta \mathbf{1}_{\{0<\theta\leq 1\}} \right) \Pi(\mathrm{d}\theta)\mathrm{d}s \Bigg\}.
\end{split}
\end{equation*}
Here we have used the following notation: $X_{s-}=\lim_{u\uparrow s} X_u$, $\Delta X_s=X_s-X_{s-}$ and $\Delta g(X_s)=g(X_s)-g(X_{s-})$.
By the L\'evy-It\^o decomposition (cf. \cite{kypbook}*{Section 2.1}) the expression between the first pair of curly brackets is a zero-mean martingale and by the compensation formula (cf. \cite{kypbook}*{Corollary 4.6}) the expression between the second pair of curly brackets is also a zero-mean martingale.
Hence taking expectations under $\mathbb P_x$ and letting $t\to\infty$, we have with the aid of the dominated convergence theorem and the regularity properties of $g$ on $[a,b]$, 
\begin{equation}
\label{eq_exitband}
\begin{split}
\mathbb E_x \left[ \mathrm{e}^{-q (\tau_a^-\wedge\tau_b^+) } g(X_{ \tau_a^-\wedge\tau_b^+})  \right] = & g(x)  + \mathbb E_x \left[ \int_{0}^{\tau_a^-\wedge\tau_b^+}\mathrm{e}^{-qs}   (\mathcal A-q) g(X_{s})   \mathrm{d}s \right] \\
= & g(x)  + \int_a^b (\mathcal A-q) g(z)  \int_{0}^{\infty}\mathrm{e}^{-qs}  \mathbb P_x(X_s\in\mathrm dz,s<\tau_a^-\wedge\tau_b^+)  \mathrm{d}s.
\end{split}
\end{equation}
Since by the lack of upward jumps,
\begin{equation}\label{eq_splitting}
\begin{split}
\mathbb E_x \left[ \mathrm{e}^{-q (\tau_a^-\wedge\tau_b^+) } g(X_{ \tau_a^-\wedge\tau_b^+})  \right] = & \mathbb E_x \left[ \mathrm{e}^{-q  \tau_a^- } g(X_{ \tau_a^-}) \mathbf 1_{\{\tau_a^-<\tau_b^+\}} \right]  + g(b) \mathbb E_x \left[ \mathrm{e}^{-q \tau_b^+ } \mathbf 1_{\{\tau_a^->\tau_b^+\}} \right],
\end{split}
\end{equation}
we have by  \eqref{eq_exitband}, \eqref{eq_splitting} and the definition of $g$,
\begin{equation}\label{mainresult_nonspec}
\begin{split}
 \mathbb E_x \left[ \mathrm{e}^{-q  \tau_a^- } f (X_{ \tau_a^-}) \mathbf 1_{\{\tau_a^-<\tau_b^+\}} \right] 
 = & g(x) -  \mathbb E_x \left[ \mathrm{e}^{-q \tau_b^+ } \mathbf 1_{\{\tau_a^->\tau_b^+\}} \right] \widetilde f(b)  \\
 & + \int_a^b (\mathcal A-q) \widetilde f(z)  \int_{0}^{\infty}\mathrm{e}^{-qs}  \mathbb P_x(X_s\in\mathrm dz,s<\tau_a^-\wedge\tau_b^+)  \mathrm{d}s \\ 
& + (f(a)-\widetilde f(a+))\mathbb E_x \left[ \mathrm{e}^{-q  \tau_a^- } \mathbf 1_{\{X_{\tau_a^-}=a,\tau_a^-<\tau_b^+\}} \right].
\end{split}
\end{equation}
Now the identities of the theorem follow by plugging \eqref{twosidedexit}, \eqref{resolvent} and \eqref{creeping} into the above equation, while noting that if $x=X_0=a$ and $X$ has paths of bounded variation, then $X_{\tau_a^-}=a$ is an event which has probability $0$, whereas if $x=X_0=a$ and $X$ has paths of unbounded variation, then $\tau_a^-=0$ and $X_{\tau_a^-}=a$ almost surely, which implies in addition  that $W^{(q)}(0)=0$, cf. \eqref{twosidedexit}.
\end{proof}

\begin{proof}[\textbf{Proof of Corollary \ref{corol_main}}]
The corollary follows easily, since by the extra condition assumed, the last term of \eqref{mainidentity_general} vanishes and by Lemma \ref{lemma_main} we are allowed to split the integral into two terms.
\end{proof}

\section{Overshoot identities for reflected and refracted L\'evy processes}\label{sec_reflecrefrac}

Following the  proof  of Theorem \ref{thm_main}, one can easily establish     identities involving the overshoot of   reflected or refracted spectrally negative L\'evy processes as well. To this end, let $Z=\{Z_t:t\geq0\}$ be the process $X$ reflected at level $b\in\mathbb R$, i.e.
\begin{equation*}
Z_t=X_t- \xi_t, \quad \text{where $\xi_t=\left( \sup_{0\leq s\leq t} (X_s-b)\vee0 \right)$} 
\end{equation*}  
 and define the stopping time
 \begin{equation*}
 T_a^-=\inf\{t>0:Z_t<a\}.
 \end{equation*}
 Further, let  $U=\{U_t:t\geq0\}$ be the process $X$ refracted at level $c\in\mathbb R$, i.e. $U$ is the strong solution to the stochastic differential equation,
 \begin{equation}\label{def_refr}
 \mathrm d U_t= \mathrm d X_t-  \delta \mathbf 1_{\{U_t>c\}} \mathrm d t, \quad U_0=X_0,
 \end{equation}  
where $0<\delta<\gamma+\int_0^1\theta\Pi(\mathrm d\theta)$. By Theorem 1 of \cite{kyploeffen}, the process $U$ is well-defined. We denote the first passage times of $U$ above and below a level by 
 \begin{equation*}
 \kappa_a^+ = \inf\{t>0:U_t>a\}, \quad \text{and} \quad \kappa_a^- = \inf\{t>0:U_t<a\}.
 \end{equation*}
 Then under the   conditions of Theorem \ref{thm_main}, we have for $a<x\leq b$,
 \begin{equation}\label{overshoot_refl}
 \begin{split}
   \mathbb E_x  \Big[ \mathrm e^{-q T_a^-}  f(Z_{T_a^-})  \Big]  
= & \widetilde f(x)  - \mathbb E_x \left[ \int_0^{T_a^-} \mathrm e^{-q s} \mathrm d \xi_s \right]    \widetilde f'_-(b) \\
& + \int_a^b   (\mathcal A-q) \widetilde f(z) \left[ \int_{0}^{\infty}\mathrm{e}^{-qs}  \mathbb P_x(Z_s\in\mathrm dz,s<T_a^-)  \mathrm{d}s \right] \mathrm dz \\
 & +   \left(   f(a) - \widetilde f(a+) \right) \mathbb E_x \left[ \mathrm e^{-q T_a^-}   \mathbf 1_{\{X_{T_a^-}=a \}}  \right] 
 \end{split}
 \end{equation}
and for $c\in(a,b)$,
\begin{equation}\label{overshoot_refr}
 \begin{split}
   \mathbb E_x  \Big[ \mathrm e^{-q \kappa_a^-}  f&(U_{\kappa_a^-})  \mathbf 1_{\{ \kappa_a^-<\kappa_b^+ \}}  \Big]   \\
 = & \widetilde f(x)  - \mathbb E_x \left[ \mathrm{e}^{-q \kappa_b^+ } \mathbf 1_{\{\kappa_b^+<\kappa_a^-\}} \right]    \widetilde f(b) \\
& + \int_a^b   \left( (\mathcal A-q) \widetilde f(z) - \delta \mathbf 1_{\{z>c\}} \widetilde f'_-(z) \right) \left[ \int_{0}^{\infty}\mathrm{e}^{-qs}  \mathbb P_x(U_s\in\mathrm dz,s<\kappa_a^-\wedge\kappa_b^+)  \mathrm{d}s \right] \mathrm dz \\
 & +  \left(  f(a) - \widetilde f(a+) \right) \mathbb E_x \left[ \mathrm e^{-q \kappa_a^-}   \mathbf 1_{\{U_{\kappa_a^-}=a, \kappa_a^-<\kappa_b^+ \}}  \right].
 \end{split}
\end{equation}
The proof of these two identities is almost the same as the proof of \eqref{mainresult_nonspec} given in Section \ref{sec_proof} and we leave the details to the reader.
Note that all the expectations and resolvent measures on the right hand sides of \eqref{overshoot_refl} and \eqref{overshoot_refr} admit analytic expressions in terms of scale functions. In particular, see Theorem 10.3 in \cite{kypbook}, respectively   Theorem 4(i) in \cite{kyploeffen}, for the first expectation on the right hand side of \eqref{overshoot_refl}, respectively \eqref{overshoot_refr}. Further, see Theorem 1.(ii) in \cite{pistorius_exitergod} and Theorem 6(i) in \cite{kyploeffen} for the two  $q$-resolvent measures and note that the  two expectations involving the event of creeping are non-zero if and only if the Gaussian coefficient is non-zero and in that case, these expectations can be derived from the corresponding resolvent measure,  see the proof of Corollary 2 in   \cite{pistorius_potential}. 
Note  also that   \eqref{overshoot_refl} for the special case  $\widetilde f(y)=f'_-(a)y + f(a)$, $y\in(a,b]$ is given in Proposition 5.5 of \cite{palmgerbershiu}.

\section{Examples}\label{sec_examples}

In order to illustrate why \eqref{mainidentity_general} and \eqref{mainidentity_simple} are useful with a simple example, let us take $f(y)=1$ for  $y\leq a$  in \eqref{gerbershiu}. It is well known, see e.g \cite{kypbook}*{Theorem 8.1} or combine \eqref{twosidedexit} and \eqref{resolvent}, that for $x\in[a,b]$,
\begin{equation}\label{simpleexample}
 \mathbb E_x \Big[ \mathrm e^{-q \tau_a^-}  \mathbf 1_{\{ \tau_a^-<\tau_b^+ \}}  \Big] 
 = Z^{(q)}(x-a) - \frac{ W^{(q)}(x-a)}{W^{(q)}(b-a)}  Z^{(q)}(b-a),
\end{equation}
where $Z^{(q)}(y)=1+q\int_0^y W^{(q)}(z)\mathrm d z$, $y\in\mathbb R$. Using \eqref{extis0}, which corresponds to choosing the extension $\widetilde f(y)=0$ for $a<y\leq b$,   we get for $a<x\leq b$,
\begin{equation}\label{example_extis0}
 \begin{split}
   \mathbb E_x  \Big[ \mathrm e^{-q \tau_a^-}   & \mathbf 1_{\{ \tau_a^-<\tau_b^+ \}}  \Big]  
 \\
 = &   \int_a^b      \Pi(z-a,\infty)   \left[ \frac{W^{(q)}(x-a)}{W^{(q)}(b-a)} W^{(q)}(b- z)  -  W^{(q)}(x-z) \right] \mathrm dz \\
 & +    \frac{\sigma^2}2 \left( W^{(q)\prime}(x-a) - \frac{W^{(q)}(x-a)}{W^{(q)}(b-a)} W^{(q)\prime}(b-a)  \right). 
 \end{split}
\end{equation}
If we did not know the identity \eqref{simpleexample}, it would  not at all be obvious that   the right hand side of \eqref{example_extis0}  actually simplifies  to the right hand side of \eqref{simpleexample}. If we instead use the extension $\widetilde f(y)=1$ for $a<y\leq b$, then Corollary \ref{corol_main} gives us directly \eqref{simpleexample}, since for that choice $\mathcal (A-q)\widetilde f(z)=-q$, $a< z< b$.

\bigskip

We now consider a more interesting example.
Fix $\delta>0$ and let $Y=\{Y_t:t\geq0\}$ be the spectrally negative L\'evy process defined by $Y_t=X_t-\delta t$ and denote by $\mathbb W^{(q)}(x)$ the $q$-scale function of $Y$. With
 \begin{equation*}
 \nu_a^+ = \inf\{t>0:Y_t>a\}, \quad \text{and} \quad \nu_a^- = \inf\{t>0:Y_t<a\},
 \end{equation*}
we look at the following special case of \eqref{gerbershiu},
\begin{equation} \label{overshootscaledef}
 \mathbb E_x \Big[ \mathrm e^{-p \nu_a^-}  W^{(q)}(Y_{\nu_a^-}) \mathbf 1_{\{ \nu_a^-<\nu_b^+ \}}  \Big],
\end{equation}
where $p,q\geq0$, $0\leq a\leq x< b$ and recall $W^{(q)}(x)$ is the scale function of $X$. This particular case has appeared in the study of refracted L\'evy processes and  occupation times of (refracted) spectrally negative L\'evy processes, see \citelist{\cite{kyploeffen}\cite{landriault_occup}\cite{loeffenrenaudzhou}\cite{kyp_occup}\cite{renaud_red}}.  We want to obtain an as simple as possible expression for \eqref{overshootscaledef}. For this we use  \eqref{mainidentity_simple} with  $\widetilde f(y)=W^{(q)}(y)$ being the obvious choice for the extension. 
 We first restrict ourselves to the case were  $X$ (equivalently $Y$) has paths of bounded variation or $\sigma>0$.
In that case $W^{(q)}(x)$ (restricted to $(-\infty,b]$) lies in $\mathcal H(X;a,b)=\mathcal H(Y;a,b)$ for any $0<a<b$, cf. Equations (4)-(6) and Theorem 1 in \cite{chankypsavov}. Further, when $X$ has paths of bounded variation, one can easily show by taking Laplace transforms and using \eqref{def_scale} and \cite{kypbook}*{Lemma 8.6}  that $(\mathcal A-q)W^{(q)}(x)=0$ for almost every $x>0$. If $\sigma>0$, then  $(\mathcal A-q)W^{(q)}(x)=0$ for every $x>0$, see e.g. p.694 of \cite{chankypsavov}. Combining everything and denoting $\mathbb A h(x) = \mathcal Ah(x)-\delta h'_-(x)$ for $h\in\mathcal H(X;a,b)$, we   conclude that for $0<a\leq x<b$, 
\begin{equation}\label{overshootscale}
 \begin{split}
 \mathbb E_x  \Big[ \mathrm e^{-p \nu_a^-}   W^{(q)}&(Y_{\nu_a^-}) \mathbf 1_{\{ \nu_a^-<\nu_b^+ \}}  \Big] \\
= & W^{(q)}(x)  -   \int_a^x   (\mathbb A-p) W^{(q)}(z)  \mathbb  W^{(p)}(x- z) \mathrm dz \\
& -  \frac{\mathbb W^{(p)}(x-a)}{\mathbb W^{(p)}(b-a)} \left[ W^{(q)}(b)  - \int_a^b   (\mathbb A-p) W^{(q)}(z)   \mathbb  W^{(p)}(b- z)  \mathrm dz \right] \\
= & W^{(q)}(x)  -   \int_a^x \left(  (q-p) W^{(q)}(z)-\delta W^{(q)\prime}(z) \right) \mathbb  W^{(p)}(x- z) \mathrm dz \\
& -  \frac{\mathbb W^{(p)}(x-a)}{\mathbb W^{(p)}(b-a)} \left[ W^{(q)}(b)  - \int_a^b   \left(  (q-p) W^{(q)}(z)-\delta W^{(q)\prime}(z) \right)    \mathbb  W^{(p)}(b- z)  \mathrm dz \right].
 \end{split}
\end{equation}
By taking limits as $a\downarrow0$, we see that  \eqref{overshootscale} also holds when $a=0$.
Note that in the above expression  the derivative of the scale function appears, despite
the fact that in general $W^{(q)\prime}(x)$ may not be well defined for a countable number of points. However, since it only appears in the integrand of an ordinary Lebesgue integral, this does not present a problem.
In the other case where $X$ has paths of unbounded variation and $\sigma=0$, it is unknown if $W^{(q)}\in \mathcal H(X;a,b)$, but one can show that  \eqref{overshootscale} (ignoring the middle part)  also follows in this case from \eqref{mainidentity_simple} when used in combination with a mollification argument, see the appendix. 

Equation \eqref{overshootscale} has been derived earlier, see \cite{kyploeffen}*{Theorem 16} in combination with \eqref{extis0}  for  the case $p=q$ and $X$ having paths of bounded variation, see \cite{loeffenrenaudzhou}*{Section 2} for the case $\delta=0$ and see \cite{renaud_red}*{Lemma 1} in combination with \cite{loeffenrenaudzhou}*{Equation (6)}  for the general case. Whereas in these references it took quite some effort to get to  \eqref{overshootscale}, with Theorem \ref{thm_main} or Corollary \ref{corol_main} it is  obvious how to come up with this relatively simple expression.
We remark that getting as simple as possible, analytic expressions for \eqref{overshootscaledef} (and in general \eqref{gerbershiu}) is not just useful for evaluating this expectation. It also allows one to tackle more complicated cases, compare e.g. the main results of \cite{landriault_occup} and \cite{loeffenrenaudzhou}. Further in the context of refracted L\'evy processes, obtaining \eqref{overshootscale} (for the case $p=q$) was a crucial step in \cite{kyploeffen} for ultimately showing the existence of these processes in the case where $X$ has paths of unbounded variation and also for solving the related optimal control problem with bounded dividend rates, cf. \cite{kyploeffenperez}.

\section{Appendix}
Here we show that \eqref{overshootscale} also holds when $X$ has paths of unbounded variation with no Gaussian component by using  \eqref{mainidentity_simple} and   mollification.  Note that in this case $W^{(q)}(x)$ is continuously differentiable on $(0,\infty)$ and $\lim_{x\downarrow0}  W^{(q)\prime} (x)=\infty$.

There exists a function $\phi$ which is infinitely differentiable function, has support $[-2,-1]$ and  satisfies $\int_{-\infty}^{\infty}\phi(x)=1$, see e.g. Section I.1.2 in \cite{hörmander}.
For $\epsilon>0$, define $\phi_\epsilon(x)=\frac{1}{\epsilon}\phi(x/\epsilon)$. Then $\phi_\epsilon$ is infinitely differentiable  with support $[-2\epsilon,-\epsilon]$ and $\int_{-\infty}^{\infty}\phi_\epsilon(x)=1$. Consider
\begin{equation*}
(W^{(q)}\star \phi_\epsilon)(x):=\int_{-\infty}^{\infty} W^{(q)}(x-z)\phi_\epsilon(z)\mathrm d z.
\end{equation*}
The function $(W^{(q)}\star \phi_\epsilon)$  is easily shown to be infinitely differentiable on $\mathbb R$, see e.g. \cite{hörmander}*{Theorem I.1.3.1}. 
Since $W^{(q)}(x)$ is continuously differentiable on $(0,\infty)$ and the derivative $W^{(q)\prime}$  is bounded on sets of the form $[1/n, n]$, $n>0$, we get by a standard application of the mean value theorem and the dominated convergence theorem that for each $x>0$, there exists $\bar{\epsilon}$ such that for all $0<\epsilon<\bar{\epsilon}$,
\begin{equation*}
(W^{(q)}\star \phi_\epsilon)'(x) = (W^{(q)\prime}\star \phi_\epsilon)(x).
\end{equation*}  
Therefore,
\begin{equation*}
\begin{split}
|W^{(q)\prime}(x) - (W^{(q)}\star \phi_\epsilon)'(x)  | = & \left| \int_{-\infty}^{\infty} \left(W^{(q)\prime}(x) - W^{(q)\prime}(x-z) \right) \phi_\epsilon(z) \mathrm d z  \right| \\
\leq & \sup_{t\in[x+\epsilon,x+2\epsilon]}  \left| W^{(q)\prime}(x) - W^{(q)\prime}(t) \right|
\end{split}
\end{equation*}
and thus  $\lim_{\epsilon\downarrow0}(W^{(q)}\star \phi_\epsilon)'(x)= W^{(q)\prime}(x)$ for all $x>0$. 
Further, as
\begin{equation*}
(W^{(q)}\star \phi_\epsilon)(x)=  \int_{-2}^{-1} W^{(q)}(x-\epsilon z)\phi_1(z)\mathrm d z 
\end{equation*}
and $W^{(q)}$ is an increasing function, we have that $(W^{(q)}\star \phi_\epsilon)(x)$ decreases to   $W^{(q)}(x)$ as $\epsilon\downarrow 0$ for all $x\in\mathbb R$. 

Next, we prove that $(\mathcal A -q)(W^{(q)}\star \phi_\epsilon)(x)=0$ for all $x>0$.
We know that for each $x,a,b\in\mathbb R$ with $a\leq b$, the process
\begin{equation*}
t\mapsto \mathrm e^{-q(t\wedge\tau_a^-\wedge\tau_b^+)} W^{(q)} \left( X_{t\wedge\tau_a^-\wedge\tau_b^+}  - a \right)
\end{equation*}
is a $\mathbb P_x$-martingale, see e.g Remark 5 in \cite{avramkyppist}. Then for $c\in[a,b]$, $\tau_c^-\leq \tau_a^-$ and so
\begin{equation*}
t\mapsto \mathrm e^{-q(t\wedge\tau_c^-\wedge\tau_b^+)} W^{(q)} \left( X_{t\wedge\tau_c^-\wedge\tau_b^+}  - a \right)
\end{equation*}
is a $\mathbb P_x$-martingale as well, cf. \cite{dellacheriemeyerB}*{Theorem VI.12}.
Then by Tonelli  we have for every $F\in\mathcal F_s$ and $t>s$,
\begin{equation*}
\begin{split}
\mathbb E_x \Big[  \mathrm e^{-q(t\wedge\tau_0^-\wedge\tau_b^+)} (W^{(q)}\star \phi_\epsilon) & \left( X_{t\wedge\tau_0^-\wedge\tau_b^+} \right) \mathbf 1_{F} \Big] \\
= & \int_{-2\epsilon}^{-\epsilon}\mathbb E_x \left[ \mathrm e^{-q(t\wedge\tau_0^-\wedge\tau_b^+)} W^{(q)} (X_{t\wedge\tau_0^-\wedge\tau_b^+} - z)    \mathbf 1_{F} \right] \phi_\epsilon(z)\mathrm d z \\
= & \int_{-2\epsilon}^{-\epsilon}\mathbb E_x \left[ \mathrm e^{-q(s\wedge\tau_0^-\wedge\tau_b^+)} W^{(q)} (X_{s\wedge\tau_0^-\wedge\tau_b^+} - z)   \mathbf 1_{F}  \right] \phi_\epsilon(z)\mathrm d z \\
= & \mathbb E_x \left[ \mathrm e^{-q(s\wedge\tau_0^-\wedge\tau_b^+)} (W^{(q)}\star \phi_\epsilon) \left( X_{s\wedge\tau_0^-\wedge\tau_b^+} \right)  \mathbf 1_{F}  \right]. 
\end{split}
\end{equation*} 
It follows that for any $x\in\mathbb R$ and $b>0$,
\begin{equation*}
t\mapsto  \mathrm e^{-q(t\wedge\tau_0^-\wedge\tau_b^+)} (W^{(q)}\star \phi_\epsilon)  \left( X_{t\wedge\tau_0^-\wedge\tau_b^+} \right)
\end{equation*}
is a $\mathbb P_x$-martingale as well. Now since $(W^{(q)}\star \phi_\epsilon)$ is smooth, we get $(\mathcal A -q)(W^{(q)}\star \phi_\epsilon)(x)=0$ for all $x>0$, cf.  p.694 of \cite{chankypsavov}.
Then using \eqref{mainidentity_simple} with  $\widetilde f(y)=(W^{(q)}\star \phi_\epsilon)(y)$, we get for $0<a\leq x<b$ and $p,q\geq0$,
\begin{equation*} 
 \begin{split}
 \mathbb E_x  \Big[ \mathrm e^{-p \nu_a^-}   & (W^{(q)}\star \phi_\epsilon)  (Y_{\nu_a^-}) \mathbf 1_{\{ \nu_a^-<\nu_b^+ \}}  \Big] \\
= & (W^{(q)}\star \phi_\epsilon)(x)  -   \int_a^x \left(  (q-p) (W^{(q)}\star \phi_\epsilon)(z)-\delta (W^{(q)}\star \phi_\epsilon)'(z) \right) \mathbb  W^{(p)}(x- z) \mathrm dz \\
& -  \frac{\mathbb W^{(p)}(x-a)}{\mathbb W^{(p)}(b-a)} \Big[ (W^{(q)}\star \phi_\epsilon)(b)  \\
& - \int_a^b   \left(  (q-p) (W^{(q)}\star \phi_\epsilon)(z)-\delta (W^{(q)}\star \phi_\epsilon)'(z) \right)    \mathbb  W^{(p)}(b- z)  \mathrm dz \Big].
 \end{split}
\end{equation*}
Taking the limit as $\epsilon\downarrow0$, we conclude by the dominated convergence theorem that \eqref{overshootscale} also holds when $X$ has paths of unbounded variation with no Gaussian component.

\begin{bibdiv}
\begin{biblist}

\bib{avramkyppist}{article}{
   author={Avram, F.},
   author={Kyprianou, A. E.},
   author={Pistorius, M. R.},
   title={Exit problems for spectrally negative L\'evy processes and
   applications to (Canadized) Russian options},
   journal={Ann. Appl. Probab.},
   volume={14},
   date={2004},
   number={1},
   pages={215--238},
}

\bib{palmgerbershiu}{misc}{
author={Avram, F.},
author={Palmowski, Z.},
author={Pistorius, M.R.},
title={On Gerber-Shiu functions and optimal dividend distribution for a L\'evy risk-process in the presence of a penalty function - a probabilistic approach},
note={To appear in Ann. Appl. Probab., arXiv:1110.4965v4 [math.PR]},
date={2014},
}

\bib{chankypsavov}{article}{
   author={Chan, T.},
   author={Kyprianou, A. E.},
   author={Savov, M.},
   title={Smoothness of scale functions for spectrally negative L\'evy
   processes},
   journal={Probab. Theory Related Fields},
   volume={150},
   date={2011},
   number={3-4},
   pages={691--708},
}

\bib{dellacheriemeyerB}{book}{
   author={Dellacherie, Claude},
   author={Meyer, Paul-Andr{\'e}},
   title={Probabilities and potential. B},
   series={North-Holland Mathematics Studies},
   volume={72},
   note={Theory of martingales;
   Translated from the French by J. P. Wilson},
   publisher={North-Holland Publishing Co., Amsterdam},
   date={1982},
   pages={xvii+463},
}

\bib{gerbershiu}{article}{
   author={Gerber, Hans U.},
   author={Shiu, Elias S. W.},
   title={On the time value of ruin},
   note={With discussion and a reply by the authors},
   journal={N. Am. Actuar. J.},
   volume={2},
   date={1998},
   number={1},
   pages={48--78},
}

\bib{hörmander}{book}{
   author={H{\"o}rmander, Lars},
   title={The analysis of linear partial differential operators. I},
   series={Grundlehren der Mathematischen Wissenschaften [Fundamental
   Principles of Mathematical Sciences]},
   volume={256},
   note={Distribution theory and Fourier analysis},
   publisher={Springer-Verlag, Berlin},
   date={1983},
   pages={ix+391},
}
	
\bib{jacodshiryaev}{book}{
	   author={Jacod, Jean},
	   author={Shiryaev, Albert N.},
	   title={Limit theorems for stochastic processes},
	   series={Grundlehren der Mathematischen Wissenschaften [Fundamental
	   Principles of Mathematical Sciences]},
	   volume={288},
	   edition={2},
	   publisher={Springer-Verlag, Berlin},
	   date={2003},
}

	\bib{kuznetsovkyprivero}{article}{
	author={Kuznetsov, A.},
	author={Kyprianou, A.E.},
	author={Rivero, V.},
	title={The theory of scale functions for spectrally negative L\'evy processes},
	book={
	title={L\'evy matters II},
	series={Lecture Notes in Mathematics},
	volume={2061},
	publisher={Springer, Heidelberg},
	},
	date={2012},
	pages={97--186},
	}
	
\bib{kypbook}{book}{
   author={Kyprianou, Andreas E.},
   title={Fluctuations of L\'evy processes with applications},
   series={Universitext},
   edition={2},
   note={Introductory lectures},
   publisher={Springer, Heidelberg},
   date={2014},
}

\bib{kyploeffen}{article}{
   author={Kyprianou, A. E.},
   author={Loeffen, R. L.},
   title={Refracted L\'evy processes},
   language={English, with English and French summaries},
   journal={Ann. Inst. Henri Poincar\'e Probab. Stat.},
   volume={46},
   date={2010},
   number={1},
   pages={24--44},
}

\bib{kyploeffenperez}{article}{
   author={Kyprianou, Andreas E.},
   author={Loeffen, Ronnie},
   author={P{\'e}rez, Jos{\'e}-Luis},
   title={Optimal control with absolutely continuous strategies for
   spectrally negative L\'evy processes},
   journal={J. Appl. Probab.},
   volume={49},
   date={2012},
   number={1},
   pages={150--166},
}

\bib{kyp_occup}{misc}{
author={Kyprianou, A.E.},
author={Pardo, J.C.},
author={P\'erez, J.L.},
title={Occupation times of refracted L\'evy processes},
note={To appear in J. Theor. Probab.},
date={2013},
}

\bib{landriault_occup}{article}{
   author={Landriault, D.},
   author={Renaud, J.-F.},
   author={Zhou, X.},
   title={Occupation times of spectrally negative L\'evy processes with
   applications},
   journal={Stochastic Process. Appl.},
   volume={121},
   date={2011},
   number={11},
   pages={2629--2641},
}

\bib{loeffenrenaudzhou}{article}{
   author={Loeffen, Ronnie L.},
   author={Renaud, Jean-Fran{\c{c}}ois},
   author={Zhou, Xiaowen},
   title={Occupation times of intervals until first passage times for
   spectrally negative L\'evy processes},
   journal={Stochastic Process. Appl.},
   volume={124},
   date={2014},
   number={3},
   pages={1408--1435},
}

\bib{protter_2nded}{book}{
   author={Protter, Philip E.},
   title={Stochastic integration and differential equations},
   series={Applications of Mathematics (New York)},
   volume={21},
   edition={2},
   note={Stochastic Modelling and Applied Probability},
   publisher={Springer-Verlag, Berlin},
   date={2004},
}

\bib{pistorius_exitergod}{article}{
   author={Pistorius, M. R.},
   title={On exit and ergodicity of the spectrally one-sided L\'evy process
   reflected at its infimum},
   journal={J. Theoret. Probab.},
   volume={17},
   date={2004},
   number={1},
   pages={183--220},
}

\bib{pistorius_potential}{article}{
   author={Pistorius, Martijn R.},
   title={A potential-theoretical review of some exit problems of spectrally
   negative L\'evy processes},
   conference={
      title={S\'eminaire de Probabilit\'es XXXVIII},
   },
   book={
      series={Lecture Notes in Math.},
      volume={1857},
      publisher={Springer, Berlin},
   },
   date={2005},
   pages={30--41},
}

\bib{renaud_red}{article}{
   author={Renaud, Jean-Fran{\c{c}}ois},
   title={On the time spent in the red by a refracted L\'evy risk process},
   journal={J. Appl. Probab.},
   volume={51},
   date={2014},
   number={4},
   pages={1171--1188},
}

\bib{sato}{book}{
   author={Sato, Ken-iti},
   title={L\'evy processes and infinitely divisible distributions},
   series={Cambridge Studies in Advanced Mathematics},
   volume={68},
   note={Translated from the 1990 Japanese original;
   Revised by the author},
   publisher={Cambridge University Press, Cambridge},
   date={1999},
}

\end{biblist}
\end{bibdiv}

\end{document}